\newcommand{\C}{\mathbb{C}}
\newcommand{\PP}{\mathbb{P}}
\newcommand{\Q}{\mathbb{Q}}
\newcommand{\Z}{\mathbb{Z}}
\newcommand{\kbar}{{\overline{k}}}
\newcommand{\calA}{\mathcal{A}}
\newcommand{\calS}{\mathcal{S}}
\DeclareMathOperator{\Gal}{Gal}
\DeclareMathOperator{\im}{im}
\DeclareMathOperator{\Spec}{Spec}
\newcommand{\Span}{\operatorname{Span}}
\newcommand{\et}{{\operatorname{et}}}
\newcommand{\GL}{\operatorname{GL}}
\newcommand{\Sp}{{\operatorname{Sp}}}
\newtheorem{theorem}{Theorem}[section]
\newtheorem{lemma}[theorem]{Lemma}
\theoremstyle{definition}
\theoremstyle{remark}
\newtheorem{remark}[theorem]{Remark}
\title{Galois specialization to symmetric points and the inverse Galois problem up to $S_n$}
\author{Borys Kadets}
\address{Borys Kadets, Department of Mathematics\\Boyd Graduate Studies Research Center\\
	University of Georgia\\
	Athens, GA 30602}
\email{kadets.math@gmail.com}
\urladdr{\url{http://bkadets.github.io}}
\thanks{}
\date{\today}
\begin{document}
\begin{abstract}
	The paper is concerned with the following version of Hilbert's irreducibility theorem: if $\pi: X \to Y$ is a Galois $G$-covering of varieties over a number field $k$ and $H \subset G$ is a subgroup, then for all sufficiently large and sufficiently divisible $n$ there exist a degree $n$ closed point $y \in |Y|$ and $x\in \pi^{-1}(y)$ for which $k(x)/k(y)$ is a Galois $H$-extension, and $k(y)/k$ is an $S_n$-extension. The result has interesting corollaries when applied to moduli spaces of various kinds. For instance, for every finite group $G$ there is a constant $N$ such that for all $n>N$ there is a degree $n$, $S_n$-extension $F/\Q$ such that over $F$ the inverse Galois problem for $G$ has a solution.
	
\end{abstract}
\maketitle

\section{Introduction}

Suppose $X$ is a connected scheme, $\bar{y}$ is a geometric point of $X$, and $\pi_G:X_G \to X$ is a finite \'etale Galois $G$-covering. Such a covering corresponds to a morphism  $\pi_1^\et(X, \bar{y}) \twoheadrightarrow G$. For every closed point $x \in |X|$ and a geometric point $\bar{x}$ above $x$, choose a path from $\bar{x}$ to $\bar{y}$ and consider the composition $\phi_x: \pi_1(\Spec k(x), \bar{x}) \to \pi_1(X, \bar{x}) \xrightarrow{\sim} \pi_1(X, \bar{y}) \twoheadrightarrow G$. The image $H_x = \im \phi_x$ of this map describes the Galois action on the fiber above $x$; changing the path from $\bar{x}$ to $\bar{y}$ changes $H_x$ by a conjugation, and so every closed point $x \in |X|$ defines a conjugacy class of a subgroup $H_x \subset G$. Standard results in number theory, such as Chebotarev's density and Hilbert's irreducibility theorems, concern the distribution of the conjugacy classes $H_x$ as $x \in |X|$ varies. In this paper we consider the case when $X$ is an arbitrary smooth variety over a number field $k$ (a variety is a separated scheme of finite type over a field). In this setting the set of closed points $|X|$ is too complicated to talk about distribution questions in the analytic sense; the main result of this note is that every conjugacy class appears above some point $x \in |X|$, and that, moreover, we can require $k(x)/k$ to be a degree $n$, $S_n$-extension for all  sufficiently large and divisible $n$.  

Recall that the \emph{index} $i(X)$ of a variety $X/k$ is the greatest common divisor of the degrees $\deg P$ of closed points $P \in |X|$. A degree $n$ separable field extension $K/k$ is called an $S_n$-extension if the Galois group of the Galois closure of $K/k$ is the symmetric group $S_n$; we call degree $n$ closed point $P$ on a variety $X/k$ an \emph{$S_n$-point} if $k(x)/k$ is an $S_n$-extension.

\begin{theorem}\label{main theorem}
	Suppose $k$ is a number field, and $X/k$ is a smooth quasi-projective variety of dimension at least $1$. Suppose $X_G \to X$ is a finite \'etale Galois covering with Galois group $G$ such that $X_G$ is geometrically irreducible. Fix a subgroup $H \subset G$. Then there exists a constant $N$ such that for any finite extension $L/k$ and for any $n>N$ which is divisible by $i(X)[G:H]$, there exist infinitely many degree $n$ $S_n$-points $x \in |X_L|$ such that $H_x$ is conjugate to $H$. 
\end{theorem}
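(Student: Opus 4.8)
The plan is to reduce to the case of curves and then build the desired point as a fiber product of carefully chosen pieces. First I would slice: by Bertini, intersecting $X$ with a suitable general hyperplane section (base-changed to a large field if necessary) produces a smooth geometrically irreducible curve $C \subset X_L$ such that the restriction $X_G \times_X C \to C$ is still a connected Galois $G$-cover, and such that $C$ still has index dividing $i(X)$ (one must track how the index can change, or at least bound it; this is where a \emph{constant} $N$ independent of $L$ enters). Replacing $X$ by $C$, we may assume $\dim X = 1$.

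\smallskip

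Next I would construct one auxiliary point realizing the subgroup $H$. The cover $X_G \to X$ descends, after dividing by $H$, to a cover $X_G/H \to X$ of degree $[G:H]$; a closed point $x \in |X|$ with $H_x$ conjugate to $H$ is exactly the image of a degree one (rational, over $k(x)$) point of the fiber of $X_G/H$. So I want: a point $y$ on $X_G/H$ of some controlled degree $d$ over $k$, lying over a point $x$ of $X$ of degree $d[G:H]/(\text{something})$ — more precisely I want $x$ of degree $e$ with $y$ of degree $e$ and $k(y) = k(x)$ so that $\phi_x$ has image exactly a conjugate of $H$. Producing such a point is routine once $\dim \ge 1$: pick any closed point of $X_G/H$ and check the induced subgroup; by moving along the curve and using that $X_G$ is geometrically irreducible, one can arrange the fiber to behave as wanted. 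Call the resulting point $x_0 \in |X_L|$ of degree $d_0$ with $H_{x_0}$ conjugate to $H$; note $i(X) [G:H] \mid d_0$ need not hold, so $d_0$ is merely \emph{some} witness degree.

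\smallskip

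The heart of the matter is promoting $x_0$ to infinitely many $S_n$-points of \emph{every} large admissible degree $n$ while keeping $H_x$ in the class of $H$. Here I would use Hilbert irreducibility in the following form: on the symmetric power $\Sym^m X$ (or on a suitable incidence variety parametrizing degree $m$ effective zero-cycles that are \emph{points}), the natural monodromy is the full symmetric group $S_m$, because $X$ is geometrically irreducible of positive dimension; so a Hilbert-irreducible (thin-complement) set of such cycles are $S_m$-points. I would then take $n = d_0 + m$ with $m$ running over all sufficiently large integers with $i(X)[G:H] \mid d_0 + m$: choose a degree $m$ $S_m$-point $z$ on $X_L$ in sufficiently general position — in particular so that the residue field $k(z)$ is linearly disjoint from $k(x_0)$ over $L$ and from the field cut out by the $G$-cover — and argue that the zero-cycle $x_0 + z$, viewed as a closed point of degree $n$ of a suitable parameter space, (i) is an $S_n$-point, using the classical fact that $S_{d_0} \times S_m$ together with a transposition generates $S_n$, hence a mild transitivity/genericity input upgrades $S_{d_0 \text{ or less}} \times S_m$ to $S_n$; and (ii) still has local monodromy class $H$ at the $x_0$ component and trivial-on-$z$ behavior, so the total $H_x$ is conjugate to $H$ since $k(z)$ splits the $G$-cover by the disjointness arrangement. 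Infinitude comes from the infinitude of such $z$.

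\smallskip

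The main obstacle I anticipate is bookkeeping the \textbf{uniformity of $N$ in $L$}: the slicing step, the index of the curve, the position of the Hilbert set, and especially the generation statement ``a transposition plus $S_a \times S_b$ generates $S_{a+b}$'' are all fine for a fixed base, but one must ensure no quantity silently depends on $[L:k]$. I expect this is handled by doing the slicing and choosing the relevant thin sets over $k$ once and for all (so $N$ depends only on $X \to X_G$, $H$), and then base-changing: a degree $n$ $S_n$-point over $k$ remains one over $L$ provided $k(x)$ and $L$ are linearly disjoint over $k$, which one can force by generic position since $X$ has infinitely many points of each large degree in the relevant thin-complement. A secondary subtlety is making ``$x_0 + z$ is a genuine closed point, not a non-reduced or split cycle'' precise — i.e., passing from $\Sym^n X$ to the locus of degree $n$ points — but this is standard once $z$ is chosen generically and disjoint from $x_0$.
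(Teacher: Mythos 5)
There is a fatal gap at the heart of your construction: the ``zero-cycle $x_0+z$'' is not a closed point of $X$. If $x_0$ and $z$ are distinct closed points, the subscheme $x_0\cup z$ has coordinate ring $k(x_0)\times k(z)$, a product of two fields, and no genericity can merge it into a single degree $n$ point with residue field a degree $n$ field extension. Correspondingly, the Galois action on its $n$ geometric points preserves the partition into the $d_0$ points of $x_0$ and the $m$ points of $z$, so it lies in $S_{d_0}\times S_m$ and is never transitive; the ``transposition'' you invoke to upgrade $S_{d_0}\times S_m$ to $S_n$ has no source. If instead you deform $x_0+z$ inside its linear equivalence class to get an honest irreducible point, you destroy the witness $x_0$ and with it your only control over $H_x$. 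The paper threads exactly this needle: the auxiliary points $P_1,\dots,P_s$ enter only through the divisor class $E=\sum m_i\pi_H^{-1}(P_i)$ on the intermediate curve $X_H=X_G/H$ (this is where the divisibility hypothesis $i(X)[G:H]\mid n$ is used, to realize every large admissible $n$ as $\deg E$); one embeds $X_H$ by $|E|$ and takes a \emph{general hyperplane section} $x_H=h\cap X_H$, a single closed point of degree $n$ whose monodromy over $(\PP^{|E|})^{\vee}$ is all of $S_n$ by the uniform position lemma. Control of $H_x$ then comes not from a pre-chosen point but from Hilbert irreducibility applied to the factored covering $I_G\to I_H\to(\PP^{|E|})^{\vee}$ of incidence varieties, which simultaneously makes $x_H$ an $S_n$-point and makes the $H$-cover above $x_H$ irreducible; a short maximality argument ($S_{n-1}$ maximal in $S_n$, plus $n>[G:H]$) identifies $L(x_H)$ with $L(\pi_H(x_H))$.

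Two further points you flag but do not resolve are genuinely needed. First, a general Bertini slice need not preserve the index: the paper's Lemma \ref{reduce-to-curves} forces the curve to pass through a fixed finite set $D$ of closed points with $\gcd(\deg P_i)=i(X)$ and uses a Lefschetz theorem on $\pi_1$ of the complement of $D$ to keep the pulled-back $G$-cover geometrically connected. Second, Hilbert irreducibility cannot be applied directly to $\Sym^m X$ for a curve of positive genus (it is fibered over the Jacobian and has no dense set of rational points in general); one must restrict to a linear system $|E|$, i.e.\ to a projective space, which is again what the incidence-variety setup accomplishes. Your uniformity-in-$L$ concern, by contrast, is handled for free in the paper's argument since the constant $m$ depends only on $g(X_H)$, $[G:H]$, and the numerical semigroup generated by the $\deg\pi_H^{-1}(P_i)$.
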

\begin{remark}
	In this theorem we may replace $X$ with an open subset that still contains a degree $d$ point. In particular, one can replace the quasi-projective assumption with the notion of ``FA-scheme'' in the sense of \cite[Section 2.2]{gabber2013index}.
\end{remark}
\begin{remark}
	Theorem \ref{main theorem} is related to a result of Poonen \cite[Theorem 1]{poonen2001points}, which implies that, in notation of our theorem, there is a closed point $x \in |X|$ with $H_x = \{e\}$ (but does not give control over the field extension $k(x)/k$.) 
\end{remark}

This result can be applied to a covering of moduli spaces to build objects with prescribed Galois structure. Taking $X$ to be the quotient $\PP^n_k/\!/G$ under a faithful action $G\curvearrowright \PP^n$ we obtain the following result (see Section \ref{applications}).

\begin{theorem}[Inverse Galois Problem up to $S_n$]\label{Inverse-Galois-application}
	Suppose $k$ is a number field, and $G$ is a finite group. Then there is a constant $N=N(G)$ such that for every $n>N$ there are infinitely many degree $n$ extensions $L/k$ with Galois group $S_n$, for which there exists an extension $F/L$ with Galois group $G$.  
\end{theorem}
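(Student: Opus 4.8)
The plan is to derive Theorem~\ref{Inverse-Galois-application} from Theorem~\ref{main theorem} by applying the latter to a quotient of projective space by a linear action of $G$. First I would set aside the trivial case $G = \{e\}$, which only asserts that $k$ has infinitely many degree $n$ $S_n$-extensions for large $n$ — itself an instance of Theorem~\ref{main theorem} with $X = \PP^1$ and trivial group and subgroup. Assuming $G \neq \{e\}$, I would work over $\Q$, so that the resulting constant depends only on $G$: let $V = \Q[G]$ be the regular representation, of dimension $|G| \geq 2$, and let $G$ act on $\PP(V) = \PP^{|G|-1}_\Q$. Since no nontrivial $g \in G$ acts on $V$ by a scalar, the fixed locus $Z = \bigcup_{g \neq e} \PP(V)^g$ is a proper Zariski-closed subset defined over $\Q$, and $U \colonequals \PP(V) \setminus Z$ is a $G$-stable dense open on which $G$ acts freely; it is geometrically irreducible of dimension $|G|-1 \geq 1$. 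Then $X \colonequals U/G$ is a smooth quasi-projective $\Q$-variety and, with $X_G \colonequals U$, the map $X_G \to X$ is a finite \'etale Galois $G$-covering whose total space is geometrically irreducible. Because $\Q$ is infinite, $\PP(V)(\Q)$ is Zariski dense and hence meets $U$, and the image in $X$ of such a $\Q$-point is a $\Q$-point; therefore $i(X) = 1$.

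Next I would apply Theorem~\ref{main theorem} to this covering with the subgroup $H = G$. Since $i(X)[G:H] = 1$, this yields a constant $N = N(G)$ with the property that for every number field $k$ and every $n > N$, taking the finite extension in Theorem~\ref{main theorem} to be $k$ itself produces infinitely many degree $n$ $S_n$-points $x \in |X_k|$ with $H_x$ conjugate to $G$. For any such $x$, set $L \colonequals k(x)$: this is a degree $n$ extension of $k$ whose Galois closure has group $S_n$. Pulling the $G$-torsor $X_G \to X$ back along $x \colon \Spec L \to X$ gives a $G$-torsor over $\Spec L$ whose number of connected components equals $[G : H_x] = 1$; hence this torsor is $\Spec F$ for a single field $F$, the residue field of the unique point of $X_G$ lying over $x$. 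As $F$ is the fixed field of $\ker \phi_x \subseteq \pi_1(\Spec L)$ and $\phi_x$ is onto $G$, the extension $F/L$ is Galois with group $\pi_1(\Spec L)/\ker\phi_x \isom G$. Thus $F/L$ solves the inverse Galois problem for $G$ over this $L$.

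The remaining point is that, for each fixed $n > N$, infinitely many pairwise non-isomorphic fields $L = k(x)$ occur. This does not follow formally from ``infinitely many points $x$'', since a fixed number field can be the residue field of infinitely many degree $n$ closed points of $X_k$, so I would extract it from the construction behind Theorem~\ref{main theorem}: there the points are obtained by selecting $k$-rational points on an auxiliary $\PP^1$ outside a thin set, and a standard thinness estimate (in the style of Serre's \emph{Topics in Galois theory}) shows that, for any finite collection of number fields, only a thin set of base points yields a fibre with residue field in that collection; hence the realized fields $k(x)$ are infinite in number. Granting this, the infinitely many $L = k(x)$ give the conclusion. I expect essentially all of the difficulty to lie in Theorem~\ref{main theorem} itself — the simultaneous control of the conjugacy class of $H_x$ and of the $S_n$-property of $k(x)/k$ — rather than in this reduction, whose only non-formal ingredient is the thinness argument just indicated.
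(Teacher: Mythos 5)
Your proposal is correct and follows essentially the same route as the paper: take a faithful linear action of $G$ on a projective space, restrict to the open locus where the quotient map $\PP(V) \to \PP(V)/\!/G$ is a finite \'etale Galois $G$-covering, observe that the base has a rational point so its index is $1$, and apply Theorem~\ref{main theorem} with $H=G$. You are in fact somewhat more careful than the paper's two-line proof on two points --- performing the construction over $\Q$ so that $N$ visibly depends only on $G$, and noting that ``infinitely many degree $n$ points'' must be upgraded to ``infinitely many non-isomorphic residue fields'' via a thinness argument --- but these are refinements of, not departures from, the paper's argument.
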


Note that it is easy to realize $G$ over some extension of $k$, say, by embedding $G$ intro a symmetric group and taking the fixed field of $G$, but such constructions never realize $G$ over an $S_n$ extension. A weak version of Theorem \ref{Inverse-Galois-application} can be deduced from the main results of \cite{fried1992embedding}: from \cite[Corollary~1]{fried1992embedding}, one can realize any group $G$ over a $\prod_{n\geqslant 1} S_n$-extension of $k$.

The same idea can be used to construct abelian varieties with specified level $N$ structures.
\begin{theorem}\label{level-structures}
	Suppose $k$ is a number field that contains $N$-th roots of unity, $g\geqslant 1$ is an integer, and $H \subset \Sp(2g, \Z/N\Z)$ is a subgroup of index $d$. Then there exists a constant $M=M(H)$ such that if $n>M$ is an integer divisible by $d$, then there exists a degree $n$, $S_n$-extension $K/k$ and a $g$-dimensional abelian variety $A/K$ such that the image of the Galois action on the $N$-torsion points $\Gal_K \to \Sp(A[N])$ is conjugate to $H$.
\end{theorem}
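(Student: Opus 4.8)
The plan is to apply Theorem~\ref{main theorem} to a level cover of a moduli space of principally polarized abelian varieties. Fix a primitive $N$-th root of unity $\zeta_N\in k$, which exists by hypothesis, and let $\calA$ be the moduli stack over $k$ of triples $(A,\lambda,P)$, where $A$ is a $g$-dimensional abelian variety, $\lambda$ a principal polarization, and $P\in A[3]$ a point of exact order $3$; this is a finite \'etale cover of the moduli stack $\calA_g$ of $g$-dimensional ppav's. Since every ppav carries the automorphism $[-1]$, which does not fix $P$, a generic such triple has trivial automorphism group; let $X\subset\calA$ be the dense open locus where this holds. Then $X$ is a smooth variety over $k$ of dimension $g(g+1)/2\geqslant 1$ — a quasi-projective scheme, being a quasi-finite separated algebraic space over the quasi-projective coarse moduli scheme, or alternatively an FA-scheme in the sense of the remark after Theorem~\ref{main theorem}. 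Moreover $i(X)=1$: picking pairwise non-isogenous non-CM elliptic curves $E_1,\dots,E_g$ over $\Q$, each carrying a rational point $Q_i$ of order $3$ (there are infinitely many such curves, lying in infinitely many isogeny classes), the triple $\bigl(E_1\times\cdots\times E_g,\ \text{product polarization},\ (Q_1,\dots,Q_g)\bigr)$ defines a $k$-rational point of $X$, since $\Aut$ of the product equals $\{\pm 1\}^g$ and no nontrivial element of it fixes $(Q_1,\dots,Q_g)$.

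Next, let $X_G$ be the cover of $X$ parametrizing, in addition, a symplectic isomorphism $\alpha\colon\bigl((\Z/N)^{2g},\langle\cdot,\cdot\rangle_{\mathrm{std}}\bigr)\isomto\bigl(A[N],e_{\mathrm{Weil}}\bigr)$, where the Weil pairing is viewed as $\Z/N$-valued via $\zeta_N$. Forgetting $\alpha$ is finite \'etale, the group $G:=\Sp(2g,\Z/N)$ acts on $X_G$ over $X$ by precomposition $\gamma\cdot\alpha=\alpha\circ\gamma$, and this action is free since the triples $(A,\lambda,P)$ over $X$ have no nontrivial automorphisms; thus $X_G\to X$ is a Galois $G$-covering. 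It is also geometrically irreducible over $k$: the geometric monodromy of $\calA_g$ on $A[3]$ is the full symplectic group, which acts transitively on points of exact order $3$, so $X$ is geometrically irreducible; and the geometric monodromy on $A[N]$, combined with the trivialization of the Weil pairing afforded by $\zeta_N\in k$, surjects onto $\Sp(2g,\Z/N)$, so the connected cover $X_{G,\kbar}\to X_{\kbar}$ has group all of $G$. (This is the point where the hypothesis $\mu_N\subset k$ is used: over a field not containing $\zeta_N$ the symplectic level-$N$ cover breaks up over $\kbar$ into components indexed by primitive $N$-th roots of unity.)

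Now apply Theorem~\ref{main theorem} to $X$, the covering $X_G\to X$, the group $G$, the given subgroup $H$ of index $d=[G:H]$, and $L=k$. Because $i(X)=1$, the divisibility condition "$n$ divisible by $i(X)[G:H]$" becomes "$n$ divisible by $d$", so we obtain a constant $M=M(H)$ such that for every $n>M$ divisible by $d$ there is a degree $n$ $S_n$-point $x\in|X|$ with $H_x$ conjugate to $H$. Set $K:=k(x)$; since $x$ lies in the fine-moduli locus $X$, it corresponds to an actual triple $(A,\lambda,P)$ over $K$. Then $K/k$ is a degree $n$, $S_n$-extension and $A$ is a $g$-dimensional abelian variety over $K$; the monodromy map $\pi_1(\Spec K)=\Gal_K\to G$ of the covering $X_G\to X$ at $x$ is, by construction, the action of $\Gal_K$ on the set of symplectic level-$N$ structures of $A$, i.e.\ the Galois action on $A[N]$, which preserves the Weil pairing because $\Gal_K$ fixes $\mu_N\subset K$. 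Hence the image of $\Gal_K\to\Sp(A[N])$ equals $H_x$, which is conjugate to $H$ under the identification $\Sp(A[N])\cong\Sp(2g,\Z/N)$ (well defined up to conjugacy). This is the assertion to be proved.

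The main obstacle is arranging, all at once, that $X_G$ be geometrically irreducible over $k$ itself rather than over a cyclotomic extension, that $X$ be an honest (quasi-projective) variety, and that $i(X)=1$. The last two points force one to rigidify $\calA_g$ by a level structure that is both defined over $\Q$ and leaves the moduli space geometrically connected — which rules out full or full-symplectic level structures, since these respectively introduce many $\kbar$-components or require a primitive root of unity, and leads instead to a $\Gamma_1(m)$-structure with $m\geqslant 3$ — after which one must still exhibit a rational point to pin down the index. The remaining ingredients — fullness of the geometric monodromy, the torsor structure on level-$N$ structures, and the translation between $H_x$ and the mod-$N$ Galois image — are standard.
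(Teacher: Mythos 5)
Your overall strategy is the same as the paper's: produce a family of $g$-dimensional abelian varieties over a base $X$ that is an honest variety with a $k$-rational point (so $i(X)=1$) and whose geometric monodromy on $A[N]$ is all of $\Sp(2g,\Z/N\Z)$, then apply Theorem~\ref{main theorem} to the resulting level-$N$ Galois cover $X_G\to X$ with the given $H$. The paper does exactly this, taking for $\calA\to X$ the Jacobian of the universal curve (or any family with surjective mod-$N$ geometric monodromy and a rational point on the base), and your translation between $H_x$ and the mod-$N$ Galois image, as well as your use of $\mu_N\subset k$ to land in the symplectic group, match the paper's argument.

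However, your specific choice of rigidification creates a genuine gap when $3\mid N$. You pass to the moduli of triples $(A,\lambda,P)$ with $P$ of exact order $3$ precisely so that automorphisms die and $X$ becomes a fine moduli variety; but the marked point $P$ is then a monodromy-invariant section of $A[3]$. If $3\mid N$, then $A[3]\subset A[N]$ and the image of $\pi_1(X_{\kbar})\to\Sp(2g,\Z/N\Z)$ is forced to lie in the stabilizer of $P$, a proper subgroup; your claim that the geometric monodromy on $A[N]$ is full is therefore false for such $N$, and $X_{G}\to X$ is disconnected, so Theorem~\ref{main theorem} does not apply. (Your product-of-elliptic-curves point and the torsor argument are fine; the coprimality is the only issue.) The fix is routine: choose the auxiliary rigidification level $m\geqslant 3$ coprime to $N$ (e.g.\ a prime $m>3$ with $m\nmid N$); since the geometric monodromy of $\calA_g$ on $A[mN]$ is $\Sp(2g,\Z/m\Z)\times\Sp(2g,\Z/N\Z)$, passing to the stabilizer of an order-$m$ point in the first factor leaves the projection to the second factor surjective. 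You should also then re-verify $i(X)=1$ with $m$-torsion points in place of $3$-torsion points (rational points of order $m$ exist on suitable elliptic curves only for small $m$, so it is cleaner to take the $k$-point to be a product of elliptic curves with a $k$-rational cyclic subgroup scheme avoided entirely by instead marking a point of order $m$ defined over $k$ for $m=4$ or $5$ when possible, or simply to exhibit any $k$-point of $X$ directly; alternatively, follow the paper and use a family, such as Jacobians of a versal family of curves with a marked Weierstrass point, whose base visibly has a rational point independent of $N$).
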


\section{Main Theorem}
We first reduce the proof to the case of curves by combining Bertini and Lefschetz theorems.

\begin{lemma}\label{reduce-to-curves}
	In the setting of Theorem \ref{main theorem}, let $D \subset X$ be a closed subscheme which is a union of closed points $D=\bigcup_{i=1}^s P_i$ such that $\gcd(\deg P_i)=i(X)$. Then there exists a smooth geometrically integral curve $Z \subset X$ with $D \subset Z$ such that for a point $z \in (Z \setminus D)(\C)$ the natural map $\pi_1((Z\setminus D)(\C), z) \to \pi_1((X\setminus D)(\C), z)$ is surjective. 
\end{lemma}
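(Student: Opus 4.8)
The plan is to construct $Z$ as a sufficiently general curve cut out by hyperplane sections, using Bertini to guarantee smoothness and geometric irreducibility through the prescribed points, and the Lefschetz hyperplane theorem (in its \'etale/topological incarnation for the fundamental group) to guarantee the surjectivity of $\pi_1$. Concretely, I would first fix a projective embedding $X \injects \PP^M$ and pass to the closure $\bar X$; the finite set $D$ of closed points spans only finitely many conditions, so by repeatedly intersecting $\bar X$ with hyperplanes chosen from a suitable linear system I can cut the dimension down while preserving $D$. The key point for smoothness-away-from-$D$ and geometric integrality is a version of Bertini that allows the hyperplanes to pass through a prescribed finite subscheme: one works in the linear system of hyperplanes containing $D$, checks that this system is still base-point-free away from $D$ (true once the embedding is chosen so that $D$ imposes independent conditions, which one can always arrange after a Veronese re-embedding), and applies Bertini irreducibility/smoothness over the infinite field $k$ (or over $\bar k$ followed by a descent/spreading-out argument to get a model over $k$). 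Iterating down to dimension $1$ yields a smooth geometrically integral curve $Z \subset X$ with $D \subset Z$, smooth also at the points of $D$ after a further generic choice.

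Next I would handle the $\pi_1$-surjectivity. Over $\C$ one has the Lefschetz hyperplane theorem for quasi-projective varieties: if $U$ is a smooth affine (or quasi-projective) variety of dimension $\geq 2$ and $H$ is a general hyperplane section, then $\pi_1(U \cap H) \to \pi_1(U)$ is surjective; this is due to Goresky--MacPherson / Hamm--L\^e in the stratified Morse theory form, and applies to $U = (X \setminus D)(\C)$. Applying this inductively along the chain of hyperplane sections used above (each of which is again a smooth quasi-projective variety of dimension one less, after removing $D$), the composite $\pi_1((Z \setminus D)(\C), z) \to \pi_1((X \setminus D)(\C), z)$ is surjective, as desired. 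One has to make sure the \emph{same} curve $Z$ can be chosen to satisfy both the Bertini conclusions and the Lefschetz conclusion simultaneously; since each is an open dense condition on the relevant linear system (the Lefschetz conclusion holds for $H$ outside a proper closed subset, by Hamm--L\^e), a general member works for all of them at once, and generality over $\C$ can be achieved by a $k$-point of the parameter space since $k$ is infinite and the good locus is Zariski-open and nonempty.

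The main obstacle I expect is the interaction between ``general'' (needed for Bertini and Lefschetz) and ``passing through $D$'' (needed to retain the prescribed points): a general hyperplane will miss $D$ entirely, so one genuinely must work inside the sublinear system $|{\mathcal O}(1) - D|$ of divisors containing $D$, and verify that Bertini and Lefschetz still hold for a general member of \emph{that} system. For Bertini this is standard once $D$ imposes independent conditions and the sublinear system separates points and tangent directions away from $D$ — both achievable after replacing ${\mathcal O}(1)$ by ${\mathcal O}(m)$ for $m \gg 0$ (equivalently, a high Veronese), so that the trace of the system on $\bar X \setminus D$ is very ample. For Lefschetz one invokes the version of the Hamm--L\^e theorem for a general member of a base-point-free linear system (rather than a literal hyperplane), applied on $X \setminus D$ where the system $|{\mathcal O}(m) - D|$ restricts to a base-point-free system; the surjectivity statement is insensitive to the extra base conditions precisely because they lie in the removed locus $D$. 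A secondary technical point is descent from $\C$ to $k$: the conditions ``smooth'', ``geometrically integral'', and ``$\pi_1$-surjective onto the topological fundamental group'' are all encoded by a nonempty Zariski-open subset of the (finite-type, geometrically irreducible) parameter space of such curves, hence contain a $k$-point, giving the curve $Z$ over $k$ that the lemma demands.
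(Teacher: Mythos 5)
Your proposal is correct and follows essentially the same route as the paper: induction on dimension, Bertini for hypersurface sections through $D$ (after a re-embedding so the system is very ample away from $D$), and a quasi-projective Lefschetz theorem for $\pi_1$-surjectivity applied to the system of sections containing $D$, which is base-point-free on $X\setminus D$. The paper packages the Lefschetz step slightly differently — it re-embeds so that $\Span D$ meets $X$ only in $D$ and realizes the section as the preimage of a generic hyperplane under the projection from $\Span D$, citing Deligne's form of the theorem — but this is the same mechanism as your appeal to Hamm--L\^e/Goresky--MacPherson for a general member of a base-point-free linear system.
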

\begin{proof}
	We prove the statement by induction on $\dim X$. The base case $\dim X = 1$ is immediate. If $\dim X>1$, we embed $X$ into a projective space $\PP^r$ and consider the intersection of $X$ with a general degree $n$ hypersurface $H \subset \PP^r$ passing through $D$. For $n$ large enough, this intersection $X'=X \cap H$ is smooth and geometrically irreducible by a suitable version of Bertini's theorem (see, for example, \cite[Theorems 1 and 7]{kleiman1979bertini}). The surjectivity on fundamental groups (for sufficiently large $n$) follows from a version of Lefschetz's theorem, as we now explain. After replacing $X \to \PP^r$ with a high degree Pl\"ucker embedding, we can assume that the span $\Span D$ of the points of $D$ intersects $X$ only at $D$, and that the projection $\pi_D$ from $\Span D$ has $\dim X$-dimensional image. In this language, the variety $X'\setminus D$ is a preimage of a hyperplane under $\pi_D$, and for $x \in X'\setminus D$ the surjectivity of $\pi_1((X'\setminus D)(\C),x) \to \pi_1((X\setminus D)(\C),x)$ follows from Lefschetz theorem in the form of \cite[Lemma 1.4]{deligne1981groupe}.
\end{proof} 
\begin{remark}
	Since the varieties are assumed to be smooth, if $\dim X > 1$ we have a natural isomorphism $\pi_1((X\setminus D)(\C),z) \xrightarrow{\sim} \pi_1(X(\C),z).$
\end{remark}

\begin{proof}[Proof of Theorem \ref{main theorem}]
Fix a collection of distinct closed points $P_1, \dots, P_s \in |X|$ such that $\gcd(\deg P_i)=i(X)$, and denote by $D$ the union $D=\bigcup_i P_i$. Let $Z$ be the smooth curve from Lemma \ref{reduce-to-curves}. Since the map $\pi_1((Z\setminus D)(\C), z) \to \pi_1((X\setminus D)(\C), z)$ is surjective, the covering $X_G \to X$ remains a geometrically connected Galois $G$-covering when pulled back to $Z$. Therefore it suffices to consider the case $\dim X =1$. In this case, after compactifying, we can assume that $X_G$ and $X$ are both smooth proper curves, $\pi_G: X_G \to X$ is a geometrically irreducible $G$-covering, and $D \subset X$ is a divisor which does not intersect the branch locus of $\pi_G$. 

Let $\pi_H:X_H \to X$ be the intermediate covering of $\pi_G$ corresponding to $H$, so that $\pi_{G/H}:X_G \to X_H$ is an $H$-covering. Another way of phrasing the theorem is that there are infinitely many degree $n$, $S_n$-points $x \in |X_L|$ and $L(x)$-rational points $x_H \in \pi_H^{-1}(x)$ such that $\pi_{G/H}^{-1}(x_H)$ is an irreducible scheme. Consider the collection $\calS$ of divisors of the form $E=m_1 \pi_H^{-1}(P_1) + \dots + m_s \pi_H^{-1}(P_s)$ for nonnegative integers $m_1, \dots, m_s$. Let $m$ be a constant satisfying the following conditions:
\begin{enumerate}
	\item $m > [G:H]=\deg \pi_H$;
	\item  $m>2 g(X_H)$, so that any divisor on $X_H$ of degree larger than $m$ is very ample;
	\item any integer larger than $m$ and divisible by $i(X)[G:H]$ is the degree of a divisor from $\calS$ ($m$ is larger than the Frobenius number of the semigroup of degrees of divisors from $\calS$).
\end{enumerate}
Note that $m$ can be chosen independently of $L.$ We claim that any $n>m$ and divisible by $i(X)[G:H]$ satisfies the conditions of the theorem. Fix an $n>m$ divisible by $i(X)[G:H]$ and choose a divisor $E \in \calS$ of degree $n$. Consider the embedding $X_H \subset \PP^r$ given by the complete linear system $|E|$. 
 
 To simplify notation, for the remainder of the proof all varieties are considered after a base change to $L$.
 Consider the correspondences $I_G$, $I_H$ that parameterize incidences between points $x$ on $X_G$ and $X_H$ and hyperplanes $h$ in $\PP^{|E|}$: $I_G \subset X_G \times \left(\PP^{|E|}\right)^\vee$ and $I_H \subset X_H \times \left(\PP^{|E|}\right)^\vee$, given by
 \[I_G=\{(x, h)\colon \pi_{G/H}(x) \in h\}\] and \[I_H=\{(x, h)\colon x \in h\}.\]
 The variety $I_G$ is irreducible, since the projection $I_G \to X_G$ is as a proper map with irreducible equidimensional fibers (the fibers are projecitve spaces of dimension $\dim |E|-1$). The covering $I_H \to (\PP^{|E|})^\vee$ has degree $n$ and monodromy group $S_n$ (see, for example,  \cite[Lemma, Chapter III, page 111]{ACGH}). Therefore, by Hilbert irreducibility theorem applied to the (factored) covering $I_G \to I_H \to \left(\PP^{|E|}\right)^\vee$, a general hyperplane section $x_H = h \cap X_H$ is a degree $n$, $S_n$-point on $X_H$, and moreover, since $I_G$ is irreducible, the fiber $\pi_{G/H}^{-1}(x_H)$ is irreducible as well (see \cite[Chapter 9]{serre1989lectures} for Hilbert's theorem in a geometric form). Finally, consider the image $x \colonequals \pi_H(x_H)$. The field extension $L(x_H)/L$ has no intermediate subextensions, and so either $L(x)=L$, or $L(x)=L(x_H)$. If $x$ is a rational point, then the degree of $x_H \subset \pi_H^{-1}(x)$ is at most $[G:H]$, contradicting the assumption $n\geqslant m > [G:H]$. Therefore $x$ is the degree $n$, $S_n$-point on $X$ we seek. 
\end{proof}
\section{Applications}\label{applications}
\begin{proof}[Proof of Theorem \ref{Inverse-Galois-application}]
	Consider an action of the group $G$ on a vector space $V$ over $k$, such that the induced action on $\PP(V)$ is faithful, and let $\pi$ be the associated quotient map $\pi: \PP(V) \to \PP(V)/\!/G$. Consider a smooth affine open subset $U \subset \PP(V)/\!/G$ over which $\pi$ is an \'etale Galois $G$-covering. Since the rational points are dense in $\PP(V)/\!/G$, the variety $U$ has rational points and so $i(U)=1$. Applying Theorem \ref{main theorem} to $X=U$, $X_G=\pi^{-1}(U)$, and $H=G$ gives the conclusion.
\end{proof}

\begin{proof}[Proof of Theorem \ref{level-structures}]
Consider any abelian scheme $\calA \to X$ over a smooth base $X$ equipped with a geometric point $\bar{x}$ above a rational point $x \in X(k)$ such that the action $\bar{\rho}_N\colon \pi_1^{\et}(X_{\kbar}, \bar{x}) \to \Sp(2g, \Z/N\Z)$ on the $N$-torsion is surjective. There are many different such families for which the monodromy is known to be surjective, for instance it is true for the Jacobian of the universal curve (see \cite[Section 5.12]{deligne1969irreducibility}). Since $k$ is assumed to have $N$-th roots of unity, the arithmetic monodromy -- image of $\rho_N:\pi_1^{\et}(X, x) \to \GL(A[N])$ -- is contained in $\Sp(A[N])$. Since $\bar{\rho}_N$ is surjective, so is the arithmetic monodromy $\rho_N$. Therefore the covering $X_N \to X$ corresponding to the kernel of $\rho_N$ is geometrically irreducible Galois \'etale $\Sp(2g, \Z/N\Z)$-covering of algebraic varieties. Applying Theorem \ref{main theorem} to this covering gives the result.  
\end{proof}
	\bibliographystyle{alpha}
	\bibliography{bibliography}
\end{document}